\newtheorem{theorem}{Theorem}[section]
\newtheorem*{theorem*}{Theorem}
\newtheorem{lemma}{Lemma}
\newtheorem{corollary}{Corollary}
\theoremstyle{remark}
\newtheorem{rem}{Remark}
\DeclareMathOperator{\rank}{rank}
\newcommand{\trans}{\mathsf{T}}
\newcommand{\C}{\mathbb{C}}
\newcommand{\sz}[2]{#1\times #2}
\newcommand{\A}{\mathbf{A}}
\definecolor{min}{rgb}{0,.5,.5}
\newcommand{\pen}{D}
\newcommand{\minu}{\mathbf{e}}
\newcommand{\minv}{\mathbf{f}}
\newcommand{\invpart}{\mathbf{G}}
\newcommand{\minw}{\mathbf{x}}
\newcommand{\minx}{\mathbf{y}}
\newcommand{\rang}{r}
\newcommand{\red}{r}
\newcommand{\rest}{k}
\newcommand{\stmp}{\mathbf u}
\newcommand{\ttmp}{\mathbf v}
\newcommand{\EE}{M}
\newcommand{\FF}{N}
\title{Inverting the sum of two singular matrices}
\author{Sofia Eriksson${}^{*,\dagger}$ and Jonas Nordqvist${}^{*}$}
\begin{document}

\begin{abstract}
Square matrices of the form~$\widetilde{\A} =\A + \minu\pen \minv^*$ are considered. An explicit expression for the inverse is given, provided~$\widetilde{\A}$ and~$\pen$ are invertible with~$\rank(\widetilde{\A}) =\rank(\A)+\rank(\minu\pen \minv^*)$. The inverse is presented in two ways, one that uses singular value decomposition and another that depends directly on the components~$\A$,~$\minu$,~$\minv$ and~$\pen$. Additionally,  a matrix determinant lemma for singular matrices follows from the derivations.
\end{abstract}

\maketitle

{\tiny

* Department of Mathematics, Linnaeus University, Växjö, Sweden

\vspace{-4pt}

$\dagger$ Corresponding author: sofia.eriksson@lnu.se
}

\ 

{\small

{\bf Keywords:} Matrix inversion, perturbed singular matrix, rank-modification

{\bf Subject classification:} 15A09
}
\section{Introduction}

We are interested in the inverse~$(A+B)^{-1}$, where~$B$ is seen as an update or a perturbation to~$A$.
Notably among previous works is the Woodbury matrix identity 
(also called the Sherman--Morrison--Woodbury formula) as detailed \emph{e.g.} in~\cite{Woodbury1950}:
\begin{align}\label{Woodbury}
 \left(A+UC V\right)^{-1}=A^{-1}-A^{-1}U\left(C^{-1}+V A^{-1}U\right)^{-1}V A^{-1}.
 \end{align}
However, \eqref{Woodbury} is inapplicable if~$A$ is singular. As an example of application,
matrices originating from finite difference schemes are inherently singular
and have to be modified using boundary conditions to become invertible.
In particular, for finite difference schemes referred to as SBP-SAT, the modified matrix is of the form 
$A+B$ where both~$A$ and~$B$ are singular but the sum~$A+B$ usually nonsingular. 
This specific type of matrices were inverted explicitly in~\cite{Eriksson20092659, InversesEriksson2021} with an extension to pseudoinverses in~\cite{doi:10.1137/20M1379083}. See~\cite{Magnus201417,DelReyFernandez2014171} for a background on the SBP-SAT methods. 

In general, given two singular matrices~$A$ and~$B$, it is well-known that there exist instances such that~$\rank(A+B) = \rank(A) + \rank(B)$ (see \emph{e.g.}~\cite{marsaglia_styan_1972} for a treatment on the topic). In this paper, we focus on the case where~$A$ and~$B$ are square~$n\times n$ rank-deficient matrices with $\rank(A)=n-k$ and~$\rank(B)=k$. 
Specifically, we are interested in matrices of the form \[\widetilde{\A} =\A + \minu\pen \minv^*,\]
and our main result, presented in Theorem~\ref{thmsvd}, is an explicit inversion formula of~$\widetilde{\A}$.\footnote{By the notation $\minv^*$ we mean the conjugate transpose of $\minv$.} The inversion is stated in terms of the matrices~$\invpart$,~$\minw$ and~$\minx$, neither of which depend on~$\pen$. The explicit formula reads
\[\widetilde{\A}^{-1}=\invpart+\minw\pen^{-1}\minx^*,\] where ~$\invpart$,~$\minw$ and~$\minx$ are given in \eqref{MWZsvdparts}, and in alternative forms in \eqref{WZ}.

Previous works have explored similar problem formulations in various contexts.
Extensions of the Woodbury formula \eqref{Woodbury} in terms of generalized inverses have been presented in several papers, for example~\cite{Riedel1992, Deng2011, DongweiJianbing2020}. 
The works by~\cite{HendersonSearle1981, Miller81} provide further related insights on the topic of inverting sums of matrices.
The expressions proven in~\cite{Deng2011, DongweiJianbing2020} remind closely of \eqref{Woodbury} except~$A^{-1}$ is replaced by different types of generalized inverses. The approach in~\cite{Riedel1992} offers results most akin to ours, as discussed further in Remark \ref{riedel}.

\section{The inverse of the sum of two singular matrices}

We  consider a matrix~$\widetilde{\A}$ of the form
\begin{align}\label{ourB}
\widetilde{\A}=\A+\minu\pen \minv^*
\end{align}
where~$\A$,~$\minu$,~$\pen$ and~$\minv$ all denote conformable matrices.  
The matrices~$\widetilde{\A}$ and~$\pen$ are both invertible but~$\A$ and~$\minu\pen \minv^*$ are singular.\footnote{
In the previously mentioned applications, typically~$\A$ is having high rank and the product~$\minu\pen \minv^*$ having low rank.} 
Based on observations from those cases in~\cite{InversesEriksson2021} that satisfy the condition~$\rank(A)+\rank(B) = \rank(A+B) = n$, we expect to find a  formula for~$\widetilde{\A}^{-1}$ of the form~$\widetilde{\A}^{-1}=\invpart+\minw\pen^{-1}\minx^*$, where~$\invpart$,~$\minw$ and~$\minx$ do not  depend on~$\pen$.
We first use singular value decomposition to confirm this expectation and derive the components of~$\widetilde{\A}^{-1}$ yielding the following theorem:

\begin{theorem}\label{thmsvd}
Let~$n,k$ be integers such that~$n>k\geq1$. Consider the complex matrix~$\widetilde{\A}=\A+\minu\pen \minv^*,$ where~$\A, \minu$,~$\pen$ and~$\minv$ are~$\sz{n}{n}$,~$\sz{n}{k}$,~$\sz{k}{k}$ and~$\sz{n}{k}$ respectively.
Further, suppose the rank of~$\A$ is~$\rang \coloneqq n-k$. It is assumed that the 
columns of~$\minu$ together with the columns of~$\A$ span~$\C^n$. Similarly, we assume that 
the
columns of~$\minv$ 
span~$\C^n$ together with~$\A^*$. Now, given that~$\pen$ is invertible,
then so is~$\widetilde{\A}$, and its inverse is
\begin{align}\label{invstruct}\widetilde{\A}^{-1}=\invpart+\minw\pen^{-1}\minx^*\end{align}
with
$\invpart$,~$\minw$ and~$\minx$ given in \eqref{MWZsvdparts}.
\end{theorem}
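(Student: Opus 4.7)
The plan is to reduce the inversion to a $2\times 2$ block problem by using the singular value decomposition (SVD) of $\A$ to expose the kernel and cokernel, and then block-factor the resulting middle matrix in a form that separates $\pen$ from the rest.

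First I would write $\A = [U_1, U_2]\,\mathrm{diag}(\Sigma_1,0)\,[V_1,V_2]^*$ with $[U_1,U_2]$ and $[V_1,V_2]$ unitary, $\Sigma_1$ the $\sz{(n-k)}{(n-k)}$ invertible diagonal of nonzero singular values, and the last $k$ columns $U_2,V_2$ spanning the cokernel and kernel of $\A$, respectively. Expanding $\minu = U_1 a + U_2 b$ and $\minv = V_1 c + V_2 d$ with $a=U_1^*\minu$, $b=U_2^*\minu$, $c=V_1^*\minv$, $d=V_2^*\minv$, the spanning hypotheses translate exactly into invertibility of the $\sz{k}{k}$ blocks $b$ and $d$: the hypothesis that $\mathrm{range}(\minu)+\mathrm{range}(\A)=\C^n$ forces the projection of $\minu$ onto $\mathrm{range}(\A)^\perp=\mathrm{range}(U_2)$ to have full column rank, i.e.\ $b$ invertible, and symmetrically for $d$.

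Next I would collect $\widetilde{\A}$ in the basis given by these unitary blocks, obtaining $\widetilde{\A}=[U_1,U_2]\,M\,[V_1,V_2]^*$ with
\[
M=\begin{pmatrix}\Sigma_1+a\pen c^*&a\pen d^*\\ b\pen c^*&b\pen d^*\end{pmatrix}.
\]
The key observation is the clean factorisation
\[
M=\begin{pmatrix}I&a\\0&b\end{pmatrix}\begin{pmatrix}\Sigma_1&0\\0&\pen\end{pmatrix}\begin{pmatrix}I&0\\ c^*&d^*\end{pmatrix},
\]
in which all three factors are invertible (the two triangular ones because $b,d$ are, the diagonal one because $\Sigma_1,\pen$ are). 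This simultaneously gives invertibility of $\widetilde{\A}$ and an explicit formula for $M^{-1}$ obtained by inverting each factor and multiplying out.

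Finally I would read off the $\pen$-dependence of $M^{-1}$. Because $\mathrm{diag}(\Sigma_1,\pen)^{-1}=\mathrm{diag}(\Sigma_1^{-1},\pen^{-1})$, the only appearance of $\pen^{-1}$ in $M^{-1}$ is in the bottom-right block, sandwiched between factors coming from the lower/upper triangular inverses and independent of $\pen$. Conjugating back by $[V_1,V_2]$ on the left and $[U_1,U_2]^*$ on the right therefore yields $\widetilde{\A}^{-1}=\invpart+\minw\pen^{-1}\minx^*$ with $\minw$, $\minx$ proportional to $V_2 d^{-*}$ and $U_2 b^{-*}$ and $\invpart$ built from $\Sigma_1^{-1}$, $a$, $c$, $b$, $d$, $U_i$, $V_i$—none depending on $\pen$. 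The main obstacle is not conceptual but bookkeeping: one must verify that the resulting explicit expressions for $\invpart$, $\minw$, $\minx$ match the form \eqref{MWZsvdparts}, and confirm that the rank hypotheses are used in exactly the two places where invertibility of $b$ and $d$ is required.
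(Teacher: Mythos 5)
Your proposal is correct and follows essentially the same route as the paper: your factorisation of $M$ into a block lower-triangular factor, $\mathrm{diag}(\Sigma_1,\pen)$, and a block upper-triangular factor is, after absorbing the triangular factors into the unitary ones (note $[U_1,U_2]\left[\begin{smallmatrix}I&a\\0&b\end{smallmatrix}\right]=[U_1,\ \minu]$ and likewise on the right), exactly the paper's factorisation \eqref{eq:svd}, and the spanning hypotheses enter in the same two places, namely the invertibility of $b=U_2^*\minu$ and $d=V_2^*\minv$. The bookkeeping you defer is precisely the explicit block-triangular inversion the paper carries out to arrive at \eqref{MWZsvdparts}.
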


\begin{proof}[Proof of Theorem~\ref{thmsvd}]

Let~$\A =\mathbf {U\Sigma V^{*}} =U_{\red}\Sigma_{\red}V_{\red}^*$ be a singular value decomposition of~$\A$ such that~$\mathbf {U, V}$ are complex unitary~$n\times n$-matrices and such that the latter expression refers to a compact singular value decomposition. That is~$U_{\red}$,~$V_{\red}$ are semi-unitary~$n\times \rang$-matrices, such that ~$ U_{\red}^* U_{\red} =I _\rang$ and~$V_{\red}^* V_{\red}=I _\rang$, where~$I _\rang$ is the~$\rang\times\rang$ identity matrix. The relation between the full and the compact singular value decomposition is written in block matrix form as
\begin{align}\label{BUSV}
\A &=\mathbf {U\Sigma V^{*}} =\left[\begin{array}{cc}U_{\red} &U_{\rest}\end{array}\right]\left[\begin{array}{cc}\Sigma_{\red}&0\\0&0\end{array}\right]\left[\begin{array}{c}V_{\red}^* \\V_{\rest}^*\end{array}\right]=U_{\red}\Sigma_{\red}V_{\red}^*,
\end{align}
where~$U_{\rest}$ refers to the~$k$ 
columns of~$\mathbf{U}$ that are not included in~$U_{\red}$ and where~$V_{\rest}$ refers to the~$k$ columns of~$\mathbf{V}$ that are not included in~$V_{\red}$.  
We can write~$\widetilde{\A}$ 
as
\begin{align}\label{eq:svd} \widetilde{\A}=\A+\minu\pen\minv^*=\left[\begin{array}{cc}U_{\red} &\minu\end{array}\right]\left[\begin{array}{cc}\Sigma_{\red}&0\\0&\pen\end{array}\right]\left[\begin{array}{c}V_{\red}^* \\\minv^*\end{array}\right].
\end{align}
Since it is given that the columns of~$\A$ together with~$\minu$ span~$\C^n$,~$U_{\red}$ together with~$\minu$ must form a basis. Thus~$\left[\begin{array}{cc}U_{\red} &\minu\end{array}\right]$ is a square, invertible matrix. Similarly, ~$\minv$  spans~$\C^n$ together with~$\A^*$ making~$\left[\begin{array}{cc}V_{\red} &\minv\end{array}\right]$ invertible. Thus we can write
\begin{align}
\label{AinvSVD}
\widetilde{\A}^{-1}=\left[\begin{array}{c}V_{\red}^* \\\minv^*\end{array}\right]^{-1}\left[\begin{array}{cc}\Sigma_{\red}^{-1}&0\\0&\pen^{-1}\end{array}\right]\left[\begin{array}{cc}U_{\red} &\minu\end{array}\right]^{-1}.
\end{align}
Next, using the orthogonality properties of~$\mathbf U$, that is that~$ U_{\red}^* U_{\red} =I _\rang$ and~$U_{\rest}^* U_{\red}=0$, we find 
\begin{align}\label{UkTe} 
\left[\begin{array}{c}U_{\red}^* \\U_{\rest}^*\end{array}\right]\left[\begin{array}{cc}U_{\red} &\minu\end{array}\right]=\left[\begin{array}{cc}U_{\red}^* U_{\red} &U_{\red}^*\minu\\U_{\rest}^* U_{\red} &U_{\rest}^*\minu\end{array}\right]
=\left[\begin{array}{cc}I_\rang&U_{\red}^*\minu\\0&U_{\rest}^*\minu\end{array}\right],\end{align}
which using inverses of block matrices leads to
\begin{align*}
\left[\begin{array}{cc}U_{\red} &\minu\end{array}\right]^{-1}&=\left[\begin{array}{cc}I_\rang&U_{\red}^*\minu\\0&U_{\rest}^*\minu\end{array}\right]^{-1}\left[\begin{array}{c}U_{\red}^* \\U_{\rest}^*\end{array}\right]\\
&=\left[\begin{array}{cc}I_\rang&-U_{\red}^*\minu(U_{\rest}^*\minu)^{-1}\\0&(U_{\rest}^*\minu)^{-1}\end{array}\right]\left[\begin{array}{c}U_{\red}^* \\U_{\rest}^*\end{array}\right]\\
&=\left[\begin{array}{c}U_{\red}^*-U_{\red}^*\minu(U_{\rest}^*\minu)^{-1}U_{\rest}^*\\(U_{\rest}^*\minu)^{-1}U_{\rest}^*\end{array}\right].
\end{align*}
Correspondingly, the orthogonality properties of~$\mathbf V$ gives
\begin{align*}
\left[\begin{array}{c}V_{\red}^*\\\minv^*\end{array}\right]^{-1}&=\left[\begin{array}{cc}V_{\red}-V_{\rest}(\minv^* V_{\rest})^{-1}\minv^* V_{\red}&V_{\rest}(\minv^* V_{\rest})^{-1}\end{array}\right].
\end{align*}
Inserting these relations  into \eqref{AinvSVD} 
gives~$\widetilde{\A}^{-1}=\invpart
+
\minw\pen^{-1}\minx^*$ with 
\begin{align}\label{MWZsvdparts}
 \begin{split}
\invpart&=(V_{\red}-V_{\rest}(\minv^* V_{\rest})^{-1}\minv^* V_{\red})\Sigma_{\red}^{-1}(U_{\red}^*-U_{\red}^*\minu(U_{\rest}^*\minu)^{-1}U_{\rest}^*)\\
\minw&=V_{\rest}
(\minv^* V_{\rest})^{-1}\\
\minx&=U_{\rest}(\minu^* U_{\rest})^{-1}.
\end{split}
\end{align}
The proof is completed by noting that the existence of~$(\minu^* U_{\rest})^{-1}$ follows from \eqref{UkTe}, 
and analogously for~$(\minv^* V_{\rest})^{-1}$. 
\end{proof}

The special structure of~$\widetilde{\A}^{-1}$ in \eqref{invstruct} leads to the following relations:
\begin{corollary}[Corollary of Theorem~\ref{thmsvd}]
\label{cor1}

Consider~$\widetilde{\A}=\A+\minu\pen \minv^*$ in \eqref{ourB}, and its inverse
$\widetilde{\A}^{-1}=\invpart+\minw\pen^{-1}\minx^*$.
The components of~$\widetilde{\A}$ and~$\widetilde{\A}^{-1}$ satisfy
\begin{align}\label{prop1ny}
\A\minw=0,&&
\minx^* \A=0,&&
\invpart \minu=0&&
\minv^* \invpart=0&&
\minv^* \minw=I_k&&
\minx^* \minu=I_k,
\end{align}
where~$I_k$ is the~$k\times k$ identity matrix.
Moreover,
\begin{align}\label{prop2ny}
I_n&=\A\invpart+\minu \minx^* ,&
I_n&=\invpart \A+\minw\minv^*.
\end{align}
\end{corollary}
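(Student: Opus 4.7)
My plan is to verify the six identities in \eqref{prop1ny} by direct substitution from the explicit formulas for $\invpart$, $\minw$, $\minx$ given in \eqref{MWZsvdparts}, and then deduce the two completeness relations in \eqref{prop2ny} from these using $\widetilde{\A}\widetilde{\A}^{-1}=I_n$ and $\widetilde{\A}^{-1}\widetilde{\A}=I_n$, which are available thanks to Theorem~\ref{thmsvd}.

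For the first four identities in \eqref{prop1ny}, I would substitute $\A=U_{\red}\Sigma_{\red}V_{\red}^*$ from \eqref{BUSV} together with the given expressions for $\minw$ and $\minx$, and repeatedly invoke the orthogonality relations $V_{\red}^*V_{\rest}=0$ and $U_{\rest}^*U_{\red}=0$ inherited from the unitary matrices $\mathbf{V}$ and $\mathbf{U}$. For example, $\A\minw=U_{\red}\Sigma_{\red}V_{\red}^*V_{\rest}(\minv^*V_{\rest})^{-1}$ vanishes immediately, and $\minx^*\A=(U_{\rest}^*\minu)^{-1}U_{\rest}^*U_{\red}\Sigma_{\red}V_{\red}^*$ does so analogously. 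The identities $\invpart\minu=0$ and $\minv^*\invpart=0$ take a touch more work: the appropriate outer factor of $\invpart$ collapses to zero when multiplied by $\minu$ or $\minv^*$ (e.g.\ the rightmost factor of $\invpart$ applied to $\minu$ yields $U_{\red}^*\minu-U_{\red}^*\minu(U_{\rest}^*\minu)^{-1}U_{\rest}^*\minu = U_{\red}^*\minu - U_{\red}^*\minu = 0$). The final two identities $\minv^*\minw=I_k$ and $\minx^*\minu=I_k$ reduce instantly to cancellation of the inverse factor in the definitions of $\minw$ and $\minx$.

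For \eqref{prop2ny}, rather than manipulating the SVD formulas again, I would simply expand the product $(\A+\minu\pen\minv^*)(\invpart+\minw\pen^{-1}\minx^*)=I_n$ and use the already-established identities to collapse three of the four resulting terms: the terms containing $\A\minw$ and $\minv^*\invpart$ vanish, while $\minv^*\minw=I_k$ together with $\pen\,I_k\,\pen^{-1}=I_k$ reduces the last cross term to $\minu\minx^*$, leaving exactly $\A\invpart+\minu\minx^*=I_n$. The companion identity $\invpart\A+\minw\minv^*=I_n$ follows in the same way from $\widetilde{\A}^{-1}\widetilde{\A}=I_n$ using the ``left-hand'' counterparts $\minx^*\A=0$, $\invpart\minu=0$, and $\minx^*\minu=I_k$. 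The argument is essentially mechanical; the only real bookkeeping hurdle is the three-factor expression for $\invpart$ when establishing $\invpart\minu=0$ and $\minv^*\invpart=0$, which is why I would prove those identities first and reuse them for \eqref{prop2ny} rather than recompute with the full formula.
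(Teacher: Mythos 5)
Your proposal is correct and follows essentially the same route as the paper: the six relations in \eqref{prop1ny} are checked directly from \eqref{MWZsvdparts} and \eqref{BUSV} using the orthogonality of $\mathbf U$ and $\mathbf V$, and \eqref{prop2ny} is then obtained by expanding $\widetilde{\A}\widetilde{\A}^{-1}=\widetilde{\A}^{-1}\widetilde{\A}=I_n$ and cancelling terms via \eqref{prop1ny}. No gaps; the level of detail you give for $\invpart\minu=0$ and $\minv^*\invpart=0$ is in fact slightly more explicit than the paper's.
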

\begin{proof}[Proof of Corollary~\ref{cor1}]
The first two relations in \eqref{prop1ny} are easily verified by insertion of 
~$\A =U_{\red}\Sigma_{\red}V_{\red}^*$ from \eqref{BUSV}
as well as the expressions for
$\minw$ and 
$\minx^*$ from
 \eqref{MWZsvdparts}  and thereafter using the orthogonality properties~$V_{\red}^* V_{\rest}=0$ and~$U_{\rest}^* U_{\red}=0$.
The latter four relations in \eqref{prop1ny} are trivially found using \eqref{MWZsvdparts}.

The relations in \eqref{prop2ny} follow from
inserting the expressions 
in
\eqref{ourB} and  
\eqref{invstruct}  into the identities~$I_n=\widetilde{\A}\widetilde{\A}^{-1}=\widetilde{\A}^{-1}\widetilde{\A}$, 
and thereafter using \eqref{prop1ny}.
\end{proof}

\subsection{Relations to generalized inverses}

Generalized inverses are sometimes classified using the Penrose conditions
\begin{align*}
i)\ AA^gA=A,&&
ii)\ A^gAA^g=A^g,&&
iii)\ (AA^g)^*=AA^g,&&
iv)\ (A^gA)^*=A^gA,
\end{align*}
and if all four properties are fulfilled then~$A^g$ is the Moore--Penrose inverse~\cite{Wangguorong2018} denoted~$A^+$.
From \eqref{prop2ny}, it is evident that~$\invpart$ times the singular matrix~$\A$ is almost equal to the identity matrix, suggesting that~$\invpart$ might be some kind of generalized inverse to~$\A$. 
Indeed, multiplying the relations in \eqref{prop2ny} by~$\A$ or~$\invpart$ and then using~\eqref{prop1ny}, we find that~$\A\invpart\A=\A$ and~$\invpart\A\invpart=\invpart$, \emph{i.e.}, the first two Penrose conditions are satisfied.
Condition ({\it iii}) and ({\it iv}) are only satisfied if
$\minu \minx^*$ and~$\minw\minv^*$ in \eqref{prop2ny} happen to be symmetric. Thus,~$\invpart\neq \A^+$  in general.
Instead, we note that since~$\A$ is a square matrix, its 
Moore--Penrose inverse is
\begin{align*}
 \A^+=V_{\red}\Sigma_{\red}^{-1}U_{\red}^*.
\end{align*}
Using this, we may express $\invpart$ from \eqref{MWZsvdparts} in terms of~$\A^+$ as
\begin{align}\label{relpseudo}
\invpart
&=(I_n-V_{\rest}(\minv^* V_{\rest})^{-1}\minv^* )\A^+(I_n-\minu(U_{\rest}^*\minu)^{-1}U_{\rest}^*).
\end{align}

\begin{rem}\label{riedel}
In~\cite{Riedel1992}, the Moore--Penrose inverse of~$\Omega =A+(V_1+W_1)G(V_2+W_2)^*$ is derived.
The matrix~$V_1$ is in the column space of~$A$, and~$W_1$ is orthogonal to it (and correspondingly for~$V_2$,~$W_2$ and~$A^*$). Using equation (2) in~\cite{Riedel1992}, given the existence of the inverses of the matrices~$\Omega$ and~$G$ therein, we obtain
\begin{align}\label{RiedelInv}
    \Omega^{-1}=(I_n-C_2V_2^*)A^+(I_n-V_1C_1^*)+C_2G^{-1}C_1^*,
\end{align}
where~$C_i=W_i(W_i^* W_i)^{-1}$. 
Comparing to
~$\widetilde{\A}=\A+\minu\pen\minv^*$, 
we identify 
\begin{align*}
\widetilde\A=\Omega,&&\A=A,&&
\minu=V_1+W_1,&&
\minv=V_2+W_2,&&
\pen=G.
\end{align*}
Relating the components of~\cite{Riedel1992} to our notation, the variables~$V_1$ and~$W_1$ can be obtained as the projections of~$\minu$ onto~$U_{\red}$ and~$U_{\rest}$, respectively (and correspondingly for~$V_2$,~$W_2$ of~$\minv$ onto~$V_{\red}$ and~$V_{\rest}$). Thus~$V_{1,2}$ and ~$C_{1,2}$ 
in \eqref{RiedelInv} correspond to
\begin{align*}V_1=U_{\red}U_{\red}^*\minu,&&
V_2=V_{\red}V_{\red}^*\minv,&&
C_1=U_{\rest}(\minu^* U_{\rest})^{-1},&& C_2=V_{\rest}(\minv^* V_{\rest})^{-1}.\end{align*}
We compare~$\Omega^{-1}$ to our results; using \eqref{MWZsvdparts}  and \eqref{relpseudo} we can rewrite~$\widetilde{\A}^{-1}$ in \eqref{invstruct} as
\begin{align*}   \widetilde{\A}^{-1}&=(I_n-V_{\rest}(\minv^* V_{\rest})^{-1}\minv^* )\A^+(I_n-\minu(U_{\rest}^*\minu)^{-1}U_{\rest}^*)\\&\quad+V_{\rest}
(\minv^* V_{\rest})^{-1}\pen^{-1}(U_{\rest}(\minu^* U_{\rest})^{-1})^*,
\end{align*}
which has a clear resemblance  to \eqref{RiedelInv}.
The formulas are not completely identical, since~$V_1C_1^*=U_{\red} U_{\red}^*\minu(U_{\rest}^* \minu )^{-1}U_{\rest}^*\neq\minu(U_{\rest}^*\minu)^{-1}U_{\rest}^*$, however
the difference belongs to the nullspace of~$\A^+=A^+$, such that
\begin{align*}
\A^+\minu(U_{\rest}^*\minu)^{-1}U_{\rest}^*&=\A^+(U_{\red}U_{\red}^*\minu+U_{\rest}U_{\rest}^*\minu)(U_{\rest}^*\minu)^{-1}U_{\rest}^*\\
&=\A^+U_{\red}U_{\red}^*\minu(U_{\rest}^*\minu)^{-1}U_{\rest}^*\\
&=A^+V_1C_1^*
\end{align*} since~$\A^+U_{\rest}=0$. 
Correspondingly, the difference between~$C_2V_2^*$ and~$ V_{\rest}(\minv^* V_{\rest})^{-1}\minv^*$ belongs to the nullspace of~$(\A^+)^*$, and hence 
$\widetilde{\A}^{-1}=\Omega^{-1}$.
\end{rem}

\subsection{Expressing the inverse without 
singular value decomposition}

In the paper~\cite{InversesEriksson2021}, 
special cases of these matrix sum inverses
were derived, 
where~$\A$ originated from finite difference stencils and %
$\minu$ and~$\minv$ depended on boundary conditions. The strategy of finding the inverse did not involve singular value decomposition. 
Instead, ideas from the relations \eqref{prop1ny} and \eqref{prop2ny} were used.

In an attempt to generalize those ideas, we modify~$\A$ and~$\invpart$, multiply the results and aim to obtain the identity matrix, using the yet unknown~$k\times k$ matrices~$\EE$,~$\FF$ and~$\sz{n}{k}$ matrices~$\stmp$,~$\ttmp$, in the following ansatz:
\begin{align}\label{eq:ansatz}
J_n&\coloneqq\left(\left(I_n-\minu(\stmp^* \minu)^{-1}\stmp^* \right)\A\left(I_n-\ttmp(\minv^* \ttmp)^{-1}\minv^* \right)+\minu \EE\minv^* \right)(\invpart+\ttmp \FF\stmp^* ).
\end{align}
We can rewrite $J_n$ as
\begin{align*}
J_n &=\underbrace{\left(I_n-\minu(\stmp^* \minu)^{-1}\stmp^* \right)\A\left(I_n-\ttmp(\minv^* \ttmp)^{-1}\minv^* \right)\invpart}_{I_n-\minu(\stmp^* \minu)^{-1}\stmp^*}+\minu \EE\underbrace{\minv^* \invpart}_{0}\\
&+\left(I_n-\minu(\stmp^* \minu)^{-1}\stmp^* \right)\A\underbrace{\left(I_n-\ttmp(\minv^* \ttmp)^{-1}\minv^* \right)\ttmp}_{0}\FF\stmp^* +\minu \EE\minv^* \ttmp \FF\stmp^* \\
&=I_n-\minu(\stmp^* \minu)^{-1}\stmp^* +\minu \EE\minv^* \ttmp \FF\stmp^*,
\end{align*}
where we have used~$\minv^* \invpart=0$ from \eqref{prop1ny} and~$I_n=\A\invpart+\minu \minx^*$ from \eqref{prop2ny} to simplify the expressions. We note that $J_n$ equals the identity matrix $I_n$ if~$(\stmp^* \minu)^{-1}=\EE\minv^* \ttmp \FF$.
Inserting~$\FF=(\minv^* \ttmp)^{-1}\EE^{-1}(\stmp^* \minu)^{-1}$ into the ansatz \eqref{eq:ansatz}, 
 we can solve for~$\invpart$.
This results in 
\begin{align*}
\invpart&=\left(\left(I_n-\minu (\stmp^* \minu )^{-1}\stmp^* \right)\A\left(I_n-\ttmp (\minv^* \ttmp )^{-1}\minv^* \right)+\minu \EE\minv^* \right)^{-1}\\
&\quad-\ttmp (\minv^* \ttmp )^{-1}\EE^{-1}(\stmp^* \minu )^{-1}\stmp^* ,
\end{align*}
where~$\EE$ is an arbitrary invertible~$k \times k$ matrix. The matrices~$\stmp$,~$\ttmp$ are arbitrary up to the requirement that~$(\minv^* \ttmp )^{-1}$ and~$(\stmp^* \minu )^{-1}$ must exist.  The simplest choice is probably~$\stmp=\minu$,~$\ttmp=\minv$, 
and unless the conditioning gets bad (for numerical purposes),~$\EE=I$ should suffice, simplifying the expression
of $\invpart$
even further
to what is presented below in \eqref{WZ}.

To obtain~$\minw$ and~$\minx$, we multiply the right expression in \eqref{prop2ny} by~$\ttmp (\minv^* \ttmp )^{-1}$ from the right and the left expression by~$(\stmp^* \minu )^{-1}\stmp^*~$ from the left, respectively. Note that the matrices~$\stmp$,~$\ttmp$ does not necessarily have to be the same as above, as before the only requirement is that~$(\minv^* \ttmp )^{-1}$ and~$(\stmp^* \minu )^{-1}$ exist. This yields the identities
\begin{align*}
\minw&=(I_n-\invpart \A)\ttmp (\minv^* \ttmp )^{-1},&
\minx^*&=(\stmp^* \minu )^{-1}\stmp^* (I_n-\A\invpart).
\end{align*}
If for simplicity choosing~$\stmp=\minu$ and~$\ttmp=\minv$ we obtain
the (not fully general) expressions in
\begin{align}\label{WZ}\begin{split}
\invpart&=\left(\left(I_n-\minu (\minu^* \minu )^{-1}\minu^* \right)\A\left(I_n-\minv (\minv^* \minv )^{-1}\minv^* \right) +\minu \minv^* \right)^{-1}\\&\quad-\minv (\minv^* \minv )^{-1}(\minu^* \minu )^{-1}\minu^*,\\
\minw&=(I_n-\invpart \A)\minv (\minv^* \minv )^{-1},\\
\minx^*&=(\minu^* \minu )^{-1}\minu^* (I_n-\A\invpart).
\end{split}
\end{align}
In cases where $\minx$ and $\minw$ are known, one can instead put $\stmp=\minx$ and $\ttmp=\minw$ in the general $\invpart$ above, leading to $\invpart=\left(\A+\minu \EE\minv^* \right)^{-1}-\minw \EE^{-1}\minx^*$.  This should  come as no surprise, since \eqref{invstruct} holds for any invertible $\pen$, including $\pen=\EE$.

\subsection{Singular matrix determinant lemma}
For a real invertible matrix~$A$, the  \emph{matrix determinant lemma} (see \emph{e.g.}~\cite{Harville97}), in generalized form
$$ \det( A +UV^\trans)=\det(I +V ^\trans A ^{-1}U )\det(A ),$$
offers an explicit expression of the determinant of a perturbed matrix. In the case where~$\A$ is singular, though, the regular matrix determinant lemma is not applicable. However, we note that 
using the determinant of products and the determinant of block matrices on 
\eqref{eq:svd}, a singular version of the lemma follows: 

\begin{lemma}
Let $\widetilde{\A}$ be defined as in Theorem~\ref{thmsvd}. Then
\[\det(\widetilde{\A}) = \det(\A + \minu\minv^*)\det(\pen).\]
\end{lemma}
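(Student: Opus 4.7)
The plan is to reuse the block factorization \eqref{eq:svd} that was already established in the proof of Theorem~\ref{thmsvd}. That factorization expresses $\widetilde{\A}$ as a product of three square matrices, so multiplicativity of the determinant immediately splits $\det(\widetilde{\A})$ into three factors.

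Concretely, first I would take determinants on both sides of \eqref{eq:svd}, obtaining
\[
\det(\widetilde{\A})=\det\bigl[\,U_{\red}\ \minu\,\bigr]\cdot\det\begin{bmatrix}\Sigma_{\red}&0\\0&\pen\end{bmatrix}\cdot\det\begin{bmatrix}V_{\red}^{*}\\ \minv^{*}\end{bmatrix},
\]
and then use the block-diagonal determinant formula to rewrite the middle factor as $\det(\Sigma_{\red})\det(\pen)$. Next, I would specialize the same identity \eqref{eq:svd} to the case $\pen=I_{k}$; since the factorization is purely algebraic (it does not use invertibility of $\pen$), this is legitimate and gives
\[
\A+\minu\minv^{*}=\bigl[\,U_{\red}\ \minu\,\bigr]\begin{bmatrix}\Sigma_{\red}&0\\0&I_{k}\end{bmatrix}\begin{bmatrix}V_{\red}^{*}\\ \minv^{*}\end{bmatrix},
\]
so that $\det(\A+\minu\minv^{*})=\det[\,U_{\red}\ \minu\,]\cdot\det(\Sigma_{\red})\cdot\det\bigl[\begin{smallmatrix}V_{\red}^{*}\\ \minv^{*}\end{smallmatrix}\bigr]$.

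Comparing the two expressions, the outer determinants and $\det(\Sigma_{\red})$ cancel in ratio, leaving $\det(\widetilde{\A})=\det(\A+\minu\minv^{*})\det(\pen)$, as claimed. There is no real obstacle: the only point worth noting is that the outer matrices $[\,U_{\red}\ \minu\,]$ and $[V_{\red}\ \minv]$ were shown in the proof of Theorem~\ref{thmsvd} to be invertible under the spanning hypotheses, and $\Sigma_{\red}$ is invertible by construction (its diagonal entries are the nonzero singular values of $\A$), so the cancellation is valid; equivalently, one can simply equate the two factorized expressions without dividing, since the common factor $\det[\,U_{\red}\ \minu\,]\det(\Sigma_{\red})\det\bigl[\begin{smallmatrix}V_{\red}^{*}\\ \minv^{*}\end{smallmatrix}\bigr]$ is nonzero.
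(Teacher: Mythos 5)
Your proof is correct and is essentially the paper's own argument: the paper likewise obtains the lemma by applying multiplicativity of the determinant and the block-diagonal determinant formula to the factorization \eqref{eq:svd}, with the case $\pen=I_k$ identifying the factor $\det(\A+\minu\minv^*)$. Your write-up just makes explicit the specialization step that the paper leaves implicit.
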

Thus, given the nonsingularity of~$\A  + \minu\minv^*$, the singularity of~$\widetilde{\A}$ is solely determined upon by~$\det(\pen)$. We further note that the inverse~$\widetilde{\A}^{-1}$  described in Theorem~\ref{thmsvd}  admit the analogous relation $\det(\widetilde{\A}^{-1})=\det(\invpart+\minw\minx^*)\det(\pen^{-1})$.

\bibliographystyle{alpha} 
\bibliography{references}

\end{document}